\documentclass[12pt,draft]{amsart}

\usepackage{amsmath,amssymb,amsthm,cite}

\newtheorem{problem}{Problem}

\newtheorem{theorem}{Theorem}

\newtheorem{lemma}{Lemma}

\newtheorem{corollary}{Corollary}

\DeclareMathOperator{\init}{Init}

\DeclareMathOperator{\term}{Term}

\DeclareMathOperator{\val}{val}

\DeclareMathOperator{\cl}{cl}

\begin{document}

\title[Semiring identities in $B_2$]{Semiring identities in the semigroup $B_2$}

\author{Vyacheslav Yu. Shaprynski\v i}\email{vshapr@yandex.ru}

\address{Ural Federal University, Institute of Natural Sciences and Mathematics, 620000, Ekaterinburg, Russia}

\subjclass{08B05, 16Y60, 20M18}

\thanks{This work was carried out with the financial support of the Ministry of Science and Higher Education of the Russian Federation (Project No. FEUZ-2026-0007)}

\keywords{Inverse semigroup, Brandt semigroup, Additively idempotent semiring, Finite Basis Problem}

\begin{abstract}
The 5-element Brandt semigroup $B_2$ admits the structure of a naturally semilattice-ordered inverse semigroup, thus becoming an additively idempotent semiring with the operation of taking greatest lower bounds as the semiring addition. For this semiring we present a finite basis of identities and thus, by the previous results, complete the solution of Finite Basis Problem for combinatorial naturally semilattice-ordered inverse semigroups.
\end{abstract}

\maketitle

\section{Introduction and summary}

An \emph{inverse semigroup} is a semigroup that satisfies the condition
$$\forall x\exists!y\quad xyx=x\text{ and }yxy=y.$$
The element $y$ is called \emph{inverse} to $x$ and denoted by $x^{-1}$. The study of inverse semigroups is a classical branch of semigroup theory. We will use basic facts about inverse semigroups without direct reference, for the details see monographs~\cite{Lawson-99,Petrich-84}. Every inverse semigroup admits the \emph{natural partial order} relation defined by
$$x\le y\iff xx^{-1}y=x.$$
An inverse semigroup is called \emph{naturally semilattice-ordered} if it is a lower semilattice under this order relation.

As an example, one can mention rook monoids. The rook monoid $\mathcal R_n$ can be defined as the set of all injective partial transformations on the set $\{1,\dots,n\}$ under the operation of composition. This monoid is an inverse semigroup with inverse partial transformations being inverse elements of the semigroup and the natural order relation being the set-theoretical inclusion of partial transformations (as binary relations). The semigroup $\mathcal R_n$ is a naturally semilattice-ordered inverse semigroup: the set-theoretical intersection of partial transformations is their greatest lower bound.

The monoid $\mathcal R_n$ admits a natural $n\times n$-matrix representation. To each element $\varphi\in\mathcal R_n$, we assign the matrix $(a_{ij})$ defined by
$$a_{ij}=\begin{cases}
1\text{ if }\varphi(i)=j,\\
0\text{ otherwise.}
\end{cases}$$
It is easy to see that composition of partial transformations corresponds to multiplication of matrices, the operation of inversion corresponds to transposition of matrices, the natural order relation corresponds to the relation of elementwise comparison of matrices, and the semilattice operation corresponds to elementwise multiplication. A matrix represents an element of $\mathcal R_n$ if and only if it contains at most one unit in each row and in each column. This explains the term ``rook monoid'' since one can consider such matrices as arrangements of rooks on the $n\times n$-chessboard such that no two rooks attack each other. In the further text, we assume the matrix representation to be the definition of the monoid $\mathcal R_n$.

Two other naturally semilattice-ordered inverse semigroups are the 5-element Brandt semigroup $B_2$ and the 6-element Brandt monoid $B_2^1$. Both these semigroups can be considered inverse subsemigroups of $\mathcal R_2$. In matrix terms, they are defined by
$$B_2=\left\{\begin{pmatrix}
0&0\\
0&0
\end{pmatrix},
\begin{pmatrix}
1&0\\
0&0
\end{pmatrix},
\begin{pmatrix}
0&1\\
0&0
\end{pmatrix},
\begin{pmatrix}
0&0\\
1&0
\end{pmatrix},
\begin{pmatrix}
0&0\\
0&1
\end{pmatrix}\right\}$$
and
$$B_2^1=B_2\cup\left\{\begin{pmatrix}
1&0\\
0&1
\end{pmatrix}\right\}.$$

In every naturally semilattice-ordered inverse semigroup multiplication distributes over the operation of taking grestest lower bounds~\cite[Proposition~1.22]{Schein-73}, so it can be considered an additively idempotent semiring. Recall that an \emph{additively idempotent semiring} (\emph{ai-semiring} for short) is an algebra with two binary operations $(S,+,\cdot)$ such that the additive reduct $(S,+)$ is a semilattice, the multiplicative reduct $(S,\cdot)$ is a semigroup, and multiplication distributes over addition. In particular, one can consider the Finite Basis Problem in the semiring signature.

Recall that a system of identities is an \emph{identity basis} of an algebra $A$ if it holds in $A$ and implies all identities that hold in $A$. An algebra is \emph{finitely based} if it has a finite identity basis and \emph{nonfinitely based} otherwise. The \emph{Finite Basis Problem} (FBP for short) consists in determining which algebras are finitely based. 

It is  well-known that the monoid $B_2^1$ is nonfinitely based in the semigroup signature~\cite{Perkins-68}. The semigroup $B_2$ has the identity basis
 \begin{align}
\label{periodicity}
&x^2\approx x^3,\\
\label{shortening}
&xyxyx\approx xyx,\\
\label{commut}
&x^2y^2\approx y^2x^2.
\end{align}
For the history of the proof of this result see~\cite{Volkov-19}. The monoid $B_2^1$ remains nonfinitely based in the semiring signature~\cite{Jackson-22,Volkov-21}. It was pro\-bably this fact that motivated the following problem.

\begin{problem}[{\!\cite[Problem~7.7(3)]{Jackson-22}}]\label{inv sem} 
Which finite naturally semilattice-ordered inverse semigroups are finitely based, in either of the signatures $\{+,\cdot\}$ or $\{+,\cdot,0\}$?
\end{problem}

In the paper~\cite{Gusev-23}, rook monoids $\mathcal R_n$ are proved to be nonfinitely based for $n\ge2$ and Problem~\ref{inv sem} is investigated in the combinatorial case. Recall that a \emph{combinatorial semigroup} is a semigroup that contains no nontrivial subgroups. The main result of~\cite{Gusev-23} states that a finite combinatorial naturally semiattice-ordered inverse semigroup $S$ is non-finitely based as an ai-semiring whenever the inverse semigroup $B_2^1$ is contained in the variety of inverse semigroups generated by $S$. 

This result ``almost solves'' Problem~\ref{inv sem} in combinatorial case. Recall that two algebras are called \emph{equationally equivalent} if they satisfy the same identities. There are, up to equational equivalence, only three combinatorial inverse semigroups that do not satisfy the condition above: the trivial semigroup, the two-element semilattice, and the semigroup $B_2$. The first two of these three semigroups are obviously finitely based, so in combinatorial case it remains to solve the FBP for the semiring $B_2$. This problem was explicitly stated in~\cite{Jackson-22}, amongst with the FBP for the subsemiring
$$B_0=B_2\setminus\left\{\begin{pmatrix}
0&0\\
1&0
\end{pmatrix}\right\}.$$

\begin{problem}[{\!\cite[Problem~7.7(1)]{Jackson-22}}] Resolve the finite or nonfinite basabi\-lity of $B_0$ and $B_2$ as ai-semirings.
\end{problem}

A finite identity basis for $B_0$ was found in~\cite{Shaprynskii-24}. Note that $B_0$ is the only, up to isomorphism, 4-element subsemiring of $B_2$. All subsemi\-rings of $B_2$ with $\le3$ elements are finitely based~\cite{Zhao-20}. Thus all proper subsemirings of $B_2$ are finitely based.

The aim of the present paper is to prove that the semiring $B_2$ is finitely based, thereby completing the solution of the Finite Basis Problem for naturally semilattice-ordered inverse semigroups in the combinatorial case. We call semigroup terms \emph{words} and semiring terms \emph{polynomials}. For polynomials $\mathbf u$ and $\mathbf v$, the inequality $\mathbf u\le\mathbf v$ denotes the identity $\mathbf u+\mathbf v\approx\mathbf v$ which corresponds to the additive semilattice order relation in ai-semirings\footnote{There is a terminological inconsistency between the theory of inverse semigroups and the theory of ai-semirings. The definition of natural partial order supposes a naturally semilattice-ordered inverse semigroup to be a lower semilattice while additive semilattices of ai-semirings are usually considered upper semilattices. In the present paper we use the semiring-theoretical notation.}. The following theorem is the main result of the present paper.

\begin{theorem}
\label{main1}
The following identities constitute an identity basis for the semiring $B_2$ within the variety of all ai-semirings:
\begin{align}
\label{rook}
&x_2z_2\le x_1z_1+x_1z_2+x_2z_1,\\
\label{crossing}
&x_1yz_2\le x_1yz_1+x_2yz_2,
\end{align}
where $x_1,x_2,z_1,z_2$ can be empty.
\end{theorem}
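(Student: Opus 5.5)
The proof has two halves: soundness (the identities \eqref{rook} and \eqref{crossing} hold in $B_2$) and completeness (every identity of $B_2$ follows from them). Soundness is a direct check with the matrix model. In the semiring order, $\mathbf u\le\mathbf v$ says: whenever all words of $\mathbf v$ take the same nonzero value $a$, every word of $\mathbf u$ takes the value $a$ as well. For \eqref{rook}, suppose $x_1z_1=x_1z_2=x_2z_1=a\ne0$. Then $x_1,x_2$ all end in the row index at which $a$'s unit sits, and $z_1,z_2$ all begin in the matching column, so $x_2z_2=a$. For \eqref{crossing}, the common nonzero values force $x_1y$ and $x_2y$ to end in the same coordinate, and $yz_1,yz_2$ to start in it, so $x_1yz_2$ again equals $a$. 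The instances with empty variables are checked the same way, with the empty word read as deletion.

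For completeness, first reduce to a statement about single words. Every polynomial is a sum of words, and $\mathbf u\approx\mathbf v$ is equivalent to the pair $\mathbf u\le\mathbf v$, $\mathbf v\le\mathbf u$. Moreover $\mathbf u\le\mathbf v$ holds iff $w\le\mathbf v$ for each word $w$ of $\mathbf u$. So it suffices to prove the following: if $U$ is a finite set of words and $B_2\models w\le\sum U$, then $w\le\sum U$ is derivable from \eqref{rook}, \eqref{crossing} and the ai-semiring axioms.

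Before that, I will derive the semigroup basis \eqref{periodicity}--\eqref{commut} of $B_2$ from \eqref{rook} and \eqref{crossing}, by specialising variables and using idempotency of $+$. This lets me work with words modulo the (finite, since $B_2$ is finite) relatively free semigroup of $B_2$. Then I describe the semantics combinatorially. An assignment of variables to the four nonzero matrices is a labelling of letters by arrows of the two-vertex graph. A word $w$ is nonzero exactly when consecutive arrows compose, and its value is determined by $\init(w)$ and $\term(w)$, i.e.\ its first and last vertex. Thus $\sum U$ takes a nonzero value $a$ iff there is a common path labelling of all $u\in U$ with common endpoints.

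The core step is a closure lemma. Let $\cl(U)$ be the smallest set of words containing $U$ and closed under the two rules read off the basis:
\begin{itemize}
\item[(R)] the rectangle completion $x_1z_1,x_1z_2,x_2z_1\Rightarrow x_2z_2$;
\item[(C)] the crossing rule $x_1yz_1,x_2yz_2\Rightarrow x_1yz_2$.
\end{itemize}
Each added word $w$ satisfies $w\le\sum U$ derivably, and $\cl(U)$ is finite modulo the semigroup identities. I then show: if $w\notin\cl(U)$, there is an evaluation in $B_2$ under which all words of $\cl(U)$ equal some $a\ne0$ but $w\ne a$. To build it, I would view factorizations of the words in $\cl(U)$ as imposing equivalences on positions. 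Closure under (C) says that positions reached after a common factor may be glued, and closure under (R) says these gluings are ``rectangular''. Together these should yield a consistent $2$-colouring of the positions, i.e.\ a $\val$ map of each variable to an arrow, which realises all words of $\cl(U)$ as paths from vertex $1$ to vertex $2$. I then have to show that $w$ violates this colouring, using that $w$ could otherwise be produced by a further application of (R) or (C).

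I expect this separation step to be the main obstacle. Choosing the colouring so that it simultaneously respects all words of $\cl(U)$ and breaks $w$ requires a careful normal form for words of $B_2$: their content, first and last letters, and the pattern of alternation. I would first try an induction on the length of $w$, cutting $w$ at a letter occurrence and applying (C) to the two halves against words of $\cl(U)$.
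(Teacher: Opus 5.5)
\textbf{There is a genuine gap: the completeness half is only announced, not proved.}

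Your soundness check is correct in substance. In (R) the index shared by $x_2$ and $z_2$ is the inner one, $x_1[2]=z_1[1]$, not the row of $a$. The reduction to inequalities $w\le\sum U$ is also fine. Your closure $\cl(U)$ under (R) and (C) is the right object: every step of the paper's derivations applies (R) or (C) to words already obtained.

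But completeness is the whole substance of the theorem, and you only say ``these should yield a consistent $2$-colouring''. The mechanism you sketch is also off target. A valuation assigns one arrow to each \emph{letter}, so what must be coloured is the set $c(U)\times\{1,2\}$ of letter-ends, not positions. Write $\mathbf q=\sum U$. The valuations with $\mathbf q\neq0$ correspond exactly to maps from the classes of $\rho(\mathbf q)$ to $\{1,2\}$; this equivalence is generated by ``first letters of two words'', ``last letters of two words'' and ``$xy$ is a factor''. You also cannot insist on paths from vertex $1$ to vertex $2$: for $U=\{x^2\}$ every nonzero valuation is a loop.

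Once this invariant is in place, the separating valuation is the easy direction (Lemma~\ref{equivalence}). What your proposal does not address is the converse. Suppose every adjacency $xy$ in $w$ satisfies $((x,2),(y,1))\in\rho(\mathbf q)$, the first letter of $w$ lies in $\init(\mathbf q)$, and the last lies in $\term(\mathbf q)$. You must show that then $w\in\cl(U)$.

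Your suggested induction along $w$ using (C) is essentially the paper's Lemma~\ref{final}. However, it needs three kinds of input from $\cl(U)$: for each adjacency $xy$ of $w$ a word containing the factor $xy$, a word beginning with the first letter of $w$, and a word ending with its last letter. Supplying these is the missing idea.

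Membership in $\rho(\mathbf q)$ only gives a chain of links, some passing through first or last letters of different words. (C) alone cannot splice across such links. For example, take $U=\{xz,z,y\}$:
\begin{itemize}
\item[] $B_2\models xy\le xz+z+y$, yet no word of $U$ contains $xy$;
\item[] $U$ is already closed under (C);
\item[] $xy$ arises only by (R), with $x_1$ empty, $z_1=z$, $z_2=y$, $x_2=x$.
\end{itemize}
The paper fills this with Lemmas~\ref{first letters} and~\ref{inner letters}. These are an induction on the length of the chain~(\ref{deduction}), with a case analysis on the pattern of the indices $n_l$. (C) is used to make prefixes or suffixes agree, and then (R) passes across a $\rho_1$- or $\rho_2$-link. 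Without this, or an equivalent argument, completeness is not established.

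Deriving the semigroup basis first is unproved and in any case unnecessary: it is a consequence of the theorem, not an ingredient.
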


In particular, Theorem~\ref{main1} states (somewhat counter-intuitively) that the identities (\ref{periodicity}), (\ref{shortening}), and (\ref{commut}) follow from (\ref{rook}) and (\ref{crossing}). Note that the identities (\ref{rook}) and (\ref{crossing}) have occurred in~\cite{Shaprynskii-24} as a part of the identity basis of $B_0$. The identity~(\ref{rook}) has been known since the work~\cite{Garvackii-71} where it was proved that this identity holds in all rook monoids.

The proof of Theorem~\ref{main1} uses an explicit description of identities in $B_2$. This description seems to be of independent interest. It is related to the ``edge closure'' technique used in~\cite{Reilly-08} to describe identities in $B_2$ in semigroup signature, however a different notation will be convenient in the semiring case. By $c(\mathbf p)$ we denote the \emph{content} of a polynomial $\mathbf p$, i.e. the set of all letters occurring in $\mathbf p$. We consider the following binary relations on the cartesian product $c(\mathbf p)\times\{1,2\}$:
\begin{align*}
&\rho_1(\mathbf p)=\{((x,1),(y,1))\mid\text{$x$ and $y$ are first letters of two words in $\mathbf p$}\};\\
&\rho_2(\mathbf p)=\{((x,2),(y,2))\mid\text{$x$ and $y$ are last letters of two words in $\mathbf p$}\};\\
&\rho_3(\mathbf p)=\{((x,2),(y,1))\mid\text{$xy$ is a subword of a word in $\mathbf p$}\}.
\end{align*}
We define $\rho(\mathbf p)$ as the equivalence relation on $c(\mathbf p)\times\{1,2\}$ generated by $\rho_1(\mathbf p)\cup\rho_2(\mathbf p)\cup\rho_3(\mathbf p)$. The \emph{initial \textup[terminal\textup]} $\rho(\mathbf p)$-\emph{class} is the $\rho(\mathbf p)$-class which contains the pair $(x,1)$ [$(x,2)$] where $x$ is the first [last] letter of a word in $\mathbf p$. We denote this class by $\init(\mathbf p)$ [$\term(\mathbf p)$].

\begin{theorem}
\label{main2} A semiring identity $\mathbf p\approx\mathbf q$ holds in $B_2$ if and only if it satisfies the following conditions
\begin{itemize}
\item[1)] $c(\mathbf p)=c(\mathbf q)$;
\item[2)] $\rho(\mathbf p)=\rho(\mathbf q)$;
\item[3)] $\init(\mathbf p)=\init(\mathbf q)$;
\item[4)] $\term(\mathbf p)=\term(\mathbf q)$.
\end{itemize}
\end{theorem}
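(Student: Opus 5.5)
The plan is to compute explicitly the value of an arbitrary polynomial under an arbitrary assignment in $B_2$, and to show that this value depends only on the data listed in conditions 1)--4). Write $e_{ij}$ ($i,j\in\{1,2\}$) for the nonzero matrix units of $B_2$. Then $e_{ij}e_{kl}=e_{il}$ if $j=k$ and $0$ otherwise. In the semiring notation the sum is the greatest lower bound, i.e.\ the elementwise product of matrices. Hence a sum of elements is nonzero if and only if all summands equal the same $e_{ij}$.

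\emph{Step 1: the evaluation lemma.} Fix an assignment $\varphi$ and a polynomial $\mathbf p$. If some letter of $c(\mathbf p)$ is sent to $0$, then every word containing it evaluates to $0$, so the sum is $0$. Otherwise write $\varphi(x)=e_{f(x,1)f(x,2)}$; this defines a map $f\colon c(\mathbf p)\times\{1,2\}\to\{1,2\}$, and every such map arises this way. A word $x_1\cdots x_k$ evaluates to $e_{f(x_1,1)f(x_k,2)}$ if $f(x_i,2)=f(x_{i+1},1)$ for all $i$, and to $0$ otherwise; so the words are nonzero exactly when $f$ is constant on the pairs in $\rho_3(\mathbf p)$. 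For the sum of all words of $\mathbf p$ to be nonzero we additionally need all first letters to carry the same $f(\cdot,1)$-value and all last letters the same $f(\cdot,2)$-value, i.e.\ $f$ must be constant on $\rho_1(\mathbf p)$ and $\rho_2(\mathbf p)$. The lemma therefore reads: $\mathbf p(\varphi)\ne0$ if and only if $\varphi(c(\mathbf p))\not\ni0$ and $f$ is constant on $\rho(\mathbf p)$-classes. In that case $\mathbf p(\varphi)=e_{f(\init(\mathbf p))\,f(\term(\mathbf p))}$. Sufficiency of 1)--4) follows at once, since the right-hand side is built only from $c(\mathbf p)$, $\rho(\mathbf p)$, $\init(\mathbf p)$ and $\term(\mathbf p)$.

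\emph{Step 2: necessity, one condition at a time, each via a separating assignment.} In each case we send every letter outside the relevant content to, say, $e_{11}$.
\begin{itemize}
\item[1)] If $x\in c(\mathbf p)\setminus c(\mathbf q)$, put $\varphi(x)=0$ and $\varphi(y)=e_{11}$ for all other letters $y$. The constant map $f\equiv1$ respects every relation, so $\mathbf q(\varphi)=e_{11}$, while $\mathbf p(\varphi)=0$.
\item[2)] Assume 1) holds and $\rho(\mathbf p)\ne\rho(\mathbf q)$, say some pair $(u,v)$ lies in $\rho(\mathbf p)$ but not in $\rho(\mathbf q)$. Let $f$ equal $1$ on the $\rho(\mathbf q)$-class of $u$ and $2$ elsewhere. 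Then $f$ is constant on $\rho(\mathbf q)$-classes, so $\mathbf q(\varphi)\ne0$. But $f$ separates $u$ from $v$, which are $\rho(\mathbf p)$-related, so $\mathbf p(\varphi)=0$.
\item[3)] Assume 1) and 2) hold and $\init(\mathbf p)\ne\init(\mathbf q)$. These are distinct classes of the same equivalence relation. Let $f$ be $1$ on $\init(\mathbf q)$ and $2$ elsewhere. Then both sides are nonzero, but their row indices are $1$ and $2$ respectively.
\item[4)] The case $\term(\mathbf p)\ne\term(\mathbf q)$ is symmetric to 3), using column indices.
\end{itemize}

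The only step requiring real care is Step 1. One must check that the semiring sum being the meet forces all summands to agree, rather than merely having overlapping supports. One must also handle the degenerate points: letters not in the content are irrelevant, and $\init$ and $\term$ are well defined because $f$ is constant on $\rho$-classes, so the choice of first or last letter does not matter. Once the lemma is in place, both directions of Theorem~\ref{main2} are routine.
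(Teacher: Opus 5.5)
Your proof is correct. Step~2 is essentially the paper's Lemma~\ref{equivalence}. It uses the same separating assignments, built from a $\{1,2\}$-colouring of the $\rho(\mathbf q)$-classes, and the same computation that a word $x_1\cdots x_k$ evaluates to $e_{f(x_1,1)f(x_k,2)}$ when $f$ respects $\rho_3$.

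The genuine difference is in the ``if'' direction.

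\emph{The paper's route.} It never evaluates polynomials for sufficiency. Instead, Lemma~\ref{final} (built on Lemma~\ref{first letters}, Corollary~\ref{last letters} and Lemma~\ref{inner letters}) shows that every identity satisfying 1)--4) is derivable from $\Sigma$. Lemma~\ref{sigma}, which says $\Sigma$ holds in $B_2$, then finishes the argument.

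\emph{Your route.} You sharpen the computation already present in Lemma~\ref{equivalence} into an exact criterion: $\mathbf p(\varphi)\neq0$ if and only if no letter of $c(\mathbf p)$ is sent to $0$ and $f$ is constant on $\rho(\mathbf p)$-classes, and in that case the value is $e_{f(\init(\mathbf p)),f(\term(\mathbf p))}$. This criterion covers every assignment, including those involving $0$, so sufficiency follows immediately. It also streamlines necessity. The paper must choose a pair from the generating set $\rho_1(\mathbf p)\cup\rho_2(\mathbf p)\cup\rho_3(\mathbf p)$ and run a three-case check that $\val_\varphi(\mathbf p)=0$. You can take an arbitrary pair of $\rho(\mathbf p)\setminus\rho(\mathbf q)$ and appeal to the criterion.

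\emph{What each buys.} Your argument is shorter, purely semantic, and self-contained for Theorem~\ref{main2}. The paper's syntactic detour is unnecessary for Theorem~\ref{main2} alone, but it is exactly what Theorem~\ref{main1} needs. Combining Lemma~\ref{equivalence} with Lemma~\ref{final} shows that every identity of $B_2$ follows from $\Sigma$, so the paper gets the finite basis and the sufficiency half of Theorem~\ref{main2} in one stroke. Your evaluation lemma says nothing about derivability from $\Sigma$. To obtain Theorem~\ref{main1} along your route, the syntactic Lemma~\ref{final} would still have to be proved separately.
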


Theorem~\ref{main2} easily implies that equation checking in $B_2$ is solvable in polynomial time. The proofs of Theorems~\ref{main1} and~\ref{main2} are given in the next section.

\section{Proofs of Theorems~\ref{main1} and~\ref{main2}}

We denote by $\Sigma$ the system of identities (\ref{rook}) and (\ref{crossing}). Let us fix notation for nonzero elements of $B_2$:
$$e_{11}=\begin{pmatrix}
1&0\\
0&0
\end{pmatrix},\ 
e_{12}=\begin{pmatrix}
0&1\\
0&0
\end{pmatrix},\ 
e_{21}=\begin{pmatrix}
0&0\\
1&0
\end{pmatrix},\ 
e_{22}=\begin{pmatrix}
0&0\\
0&1
\end{pmatrix}.$$
For an element $a=e_{ij}\in B_2$, we write $a[1]=i$ and $a[2]=j$. The numbers $a[1]$ and $a[2]$ are undefined for $a=0$. The following rules describe addition and multiplication in $B_2$
\begin{align}
\label{addition}
x+y&=\begin{cases}
x\text{ if }x=y,\\
0\text{ otherwise;}
\end{cases}\\
\label{multiplication}
xy&=\begin{cases}
e_{x[1],y[2]}\text{ if }x,y\neq0\text{ and }x[2]=y[1],\\
0\text{ otherwise.}
\end{cases}
\end{align}

\begin{lemma}
\label{sigma} The system $\Sigma$ holds in the semiring $B_2$.
\end{lemma}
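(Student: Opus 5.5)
Both identities in $\Sigma$ are inequalities $\mathbf u\le\mathbf v$, i.e. $\mathbf u+\mathbf v\approx\mathbf v$. By (\ref{addition}), a sum in $B_2$ is nonzero exactly when all summands are equal and nonzero. So $\mathbf u\le\mathbf v$ holds in $B_2$ as soon as the following is true: whenever every word of $\mathbf v$ evaluates to the same nonzero element $a$, the word $\mathbf u$ also evaluates to $a$. If the value of $\mathbf v$ is $0$, there is nothing to check, since $0$ is absorbing for $+$. The plan is to fix an arbitrary assignment of elements of $B_2$ to the letters, assume the right-hand side equals some $a\neq0$, and show the left-hand side equals $a$. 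Since $x_1,x_2,z_1,z_2$ may be empty, I will treat an empty word as a formal identity. One way is to work in $B_2^1$, where the empty word takes the value $1$ and everything still obeys (\ref{multiplication}), with $1[1],1[2]$ read as ``any index''. The cleaner way is to phrase everything through the index pairs $(w[1],w[2])$ of nonempty words and check the degenerate cases separately.

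For (\ref{rook}), assume $x_1z_1=x_1z_2=x_2z_1=a\neq0$. First take all four words nonempty. Each product is nonzero, so $x_1,x_2,z_1,z_2\ne0$. From $x_1z_1$ and $x_2z_1$ we get $x_1[2]=x_2[2]=z_1[1]$. From $x_1z_2$ we get $z_2[1]=x_1[2]$. Hence $x_2[2]=z_2[1]$, so $x_2z_2=e_{x_2[1],z_2[2]}$. Comparing values, $x_2[1]=a[1]$ (from $x_2z_1=a$) and $z_2[2]=a[2]$ (from $x_1z_2=a$), so $x_2z_2=a$. Next the degenerate cases. If $x_1$ is empty, the hypotheses read $z_1=z_2=x_2z_1=a$, and then $x_2z_2=x_2z_1=a$. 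The case $z_1$ empty is symmetric. If $x_2$ is empty, we need $z_2=a$. We know $x_1z_2=a$ and $z_1=a$ (the latter from $x_2z_1=a$), so $z_2[2]=a[2]$. Then $x_1z_1=a=z_1$ forces $x_1=e_{ii}$ with $i=a[1]$. Now $x_1z_2=a$ forces $z_2[1]=i=a[1]$, so $z_2=a$. The case $z_2$ empty is symmetric. Several empty variables at once reduce to trivialities.

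For (\ref{crossing}), assume $x_1yz_1=x_2yz_2=a\ne0$. First take $x_i,z_i$ nonempty. Then $x_1[2]=y[1]$ and $y[2]=z_2[1]$, so $x_1yz_2$ is nonzero with first index $x_1[1]=a[1]$ and last index $z_2[2]=a[2]$. Hence $x_1yz_2=a$. If $x_1$ is empty, the needed value is that of $yz_2$. Here $y[1]=a[1]$ because $yz_1=a$, and $yz_2$ is nonzero with last index $a[2]$ because it is a suffix of the nonzero word $x_2yz_2$. So $yz_2=a$. The cases with $z_2$ empty, or with $x_2$ or $z_1$ empty, go the same way, using that the relevant index is read off the other equation.

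The only real obstacle is bookkeeping the empty-variable cases. The cleanest route is to state once the principle that a nonzero word value is determined by the first index of its first letter and the second index of its last letter, given that all internal adjacent indices match. With that principle, each case reduces to checking endpoint indices and one internal match.
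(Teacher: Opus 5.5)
Your proof is correct. For (\ref{crossing}) it follows the paper's own case analysis: read $x_1[1]=a[1]$ and $x_1[2]=y[1]$ off the first summand, read $y[2]=z_2[1]$ and $z_2[2]=a[2]$ off the second, and treat empty $x_1$ or empty $z_2$ separately.

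For (\ref{rook}) you take a different route. The paper does no computation there. It cites~\cite{Garvackii-71}, where (\ref{rook}) is shown to hold in every rook monoid, and uses that $B_2$ is a subsemiring of $\mathcal R_2$. The instances with empty variables then come for free by substituting the identity matrix of the monoid $\mathcal R_2$.

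Your direct check is self-contained and handles both identities by one principle: a nonzero product in $B_2$ is determined by its outer indices once the inner ones match. The price is enumerating the degenerate substitutions by hand. You do this correctly, including the less obvious case of empty $x_2$, where $x_1z_1=z_1=a$ forces $x_1=e_{a[1],a[1]}$.

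The citation shows that (\ref{rook}) is not specific to $B_2$. By contrast, (\ref{crossing}) is specific to $B_2$: it already fails in $B_2^1$. Take $y=1$, $x_1=z_1=e_{11}$, $x_2=e_{12}$, $z_2=e_{21}$. The right-hand side is $e_{11}$, while the left-hand side is $e_{11}e_{21}=0$.

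So your aside about computing in $B_2^1$ with ``$1[1],1[2]$ read as any index'' is safe only if the value $1$ is given exclusively to the empty variables. Since you end up checking the degenerate cases directly, this does not affect your argument.
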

\begin{proof}
(\ref{rook}) Follows from the results of~\cite{Garvackii-71} since the semiring $B_2$ is a subsemiring of $\mathcal R_2$.

(\ref{crossing}) Suppose we have assigned values in $B_2$ to all letters. We can denote these values by the same letters without ambiguity. In the nontrivial case, we can assume that the right hand side $x_1yz_1+x_2yz_2$ takes a non-zero value. There are four possible cases.

\emph{Case}~1. The letters $x_1$ and $z_2$ are non-empty. We have $x_1[2]=y[1]$ and $y[2]=z_2[1]$ by~(\ref{addition}) and~(\ref{multiplication}), whence $x_1yz_1+x_2yz_2=x_1yz_2$.

\emph{Case}~2. The letter $z_2$ is empty and the letter $x_1$ is not, so the identity is $x_1yz_1+x_2y\le x_1y$. We have $x_1[2]=y[1]$ by~(\ref{addition}) and~(\ref{multiplication}), whence $x_1yz_1+x_2y=x_1y$.

\emph{Case}~3. The letter $z_2$ is empty and the letter $x_1$ is not. This case is dual to the previous one.

\emph{Case}~4. The letters $x_1$ and $z_2$ are empty, so the identity is $yz_1+x_2y\le y$. The conclusion $yz_1+x_2y=y$ immediately follows from~(\ref{addition}) and~(\ref{multiplication}).
\end{proof}

We denote the $\rho(\mathbf p)$-class of a pair $(x,i)$ by $(x,i)\rho(\mathbf p)$. The following lemma constitutes the ``only if'' part of Theorem~\ref{main2}.

\begin{lemma}
\label{equivalence} If an identity $\mathbf p\approx\mathbf q$ holds in $B_2$ then it satisfies the conditions \textup{1)--4)} of Theorem~\textup{\ref{main2}}.
\end{lemma}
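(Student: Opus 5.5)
The plan is to prove each condition by contraposition: if one of 1)--4) fails, we exhibit an assignment of the letters in $B_2$ under which $\mathbf p$ and $\mathbf q$ take different values. Everything rests on a preliminary description of how a polynomial evaluates in $B_2$. An assignment of nonzero values to the letters of $c(\mathbf p)$ is the same thing as a function
$$f\colon c(\mathbf p)\times\{1,2\}\to\{1,2\},\qquad x\mapsto e_{f(x,1),f(x,2)}.$$

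The key step is the following claim. Under such an assignment, $\mathbf p$ takes a nonzero value if and only if $f$ is constant on every $\rho(\mathbf p)$-class. In that case the value of $\mathbf p$ is $e_{a,b}$, where $a$ is the value of $f$ on $\init(\mathbf p)$ and $b$ is its value on $\term(\mathbf p)$. The claim follows from~(\ref{multiplication}) and~(\ref{addition}):
\begin{itemize}
\item a word $x_1\cdots x_k$ is nonzero iff $x_i[2]=x_{i+1}[1]$ for every adjacent pair, which is exactly constancy of $f$ on the pairs in $\rho_3$;
\item its value is then $e_{x_1[1],x_k[2]}$;
\item a sum of nonzero elements is nonzero iff all summands are equal, i.e.\ iff all first letters have the same first index and all last letters have the same second index, which is constancy on the pairs in $\rho_1$ and $\rho_2$.
\end{itemize}
Constancy on the generating pairs is equivalent to constancy on the classes of the generated equivalence $\rho(\mathbf p)$. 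Note also that, since $\rho_1(\mathbf p)$ links all first letters and $\rho_2(\mathbf p)$ links all last letters, $\init(\mathbf p)$ and $\term(\mathbf p)$ are well defined.

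\emph{Condition 1).} Suppose some letter $x$ lies in $c(\mathbf p)\setminus c(\mathbf q)$. Assign $0$ to $x$ and $e_{11}$ to every other letter. Then $\mathbf p$ evaluates to $0$. Every word of $\mathbf q$ evaluates to $e_{11}$, so $\mathbf q$ evaluates to $e_{11}$, and the identity fails. Hence we may assume $c(\mathbf p)=c(\mathbf q)$, so both relations live on the same set.

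\emph{Condition 2).} Suppose $\rho(\mathbf p)\ne\rho(\mathbf q)$. By symmetry, some pair $(u,v)$ lies in $\rho(\mathbf p)$ but not in $\rho(\mathbf q)$. Define $f$ to be $1$ on the $\rho(\mathbf q)$-class of $u$ and $2$ elsewhere. Then $f$ is constant on every $\rho(\mathbf q)$-class, so $\mathbf q$ is nonzero by the claim. But $f$ separates $u$ from $v$ inside one $\rho(\mathbf p)$-class, so $\mathbf p$ is zero.

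\emph{Conditions 3) and 4).} Now assume $\rho(\mathbf p)=\rho(\mathbf q)$ but $\init(\mathbf p)\ne\init(\mathbf q)$. Take $f$ equal to $1$ on $\init(\mathbf p)$ and $2$ elsewhere. Both polynomials are nonzero, since $f$ is constant on the common classes. The value of $\mathbf p$ has first index $1$, while that of $\mathbf q$ has first index $2$, because $\init(\mathbf q)$ is a different class. Condition 4) is handled dually, using second indices.

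I expect the main care to be needed in the evaluation claim, in particular in checking that the generating relations $\rho_1,\rho_2,\rho_3$ correspond exactly to the nonvanishing conditions from~(\ref{multiplication}) and~(\ref{addition}); after that, each condition is a one-line construction.
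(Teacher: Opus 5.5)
Your proof is correct and follows essentially the same route as the paper. In both, you build the assignment from a two-valued function on the $\rho(\mathbf q)$-classes, so that $\mathbf q$ evaluates to $e_{\varphi(\init(\mathbf q)),\varphi(\term(\mathbf q))}$, and check that $\mathbf p$ then vanishes or has a different index. The only difference is packaging: you prove one ``nonzero iff constant on $\rho(\mathbf p)$-classes'' claim and apply it to an arbitrary pair in $\rho(\mathbf p)\setminus\rho(\mathbf q)$, whereas the paper picks a generating pair and treats the $\rho_1$, $\rho_2$, $\rho_3$ cases separately.
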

\begin{proof}
1) Suppose $c(\mathbf p)\not\subseteq c(\mathbf q)$. If we assign the value $e_{11}$ to all letters in $\mathbf q$ and the value $0$ to all letters in $c(\mathbf p)\setminus c(\mathbf q)$ then $\mathbf p$ takes the value $0$ and $\mathbf q$ takes the value $e_{11}$. Therefore, the identity $\mathbf p\approx\mathbf q$ fails in $B_2$.

2) Suppose $c(\mathbf p)=c(\mathbf q)$ and $\rho(\mathbf p)\not\subseteq\rho(\mathbf q)$. Therefore, 
$$\rho_1(\mathbf p)\cup\rho_2(\mathbf p)\cup\rho_3(\mathbf p)\not\subseteq\rho(\mathbf q).$$ 
Take a pair 
$$((x,i),(y,j))\in(\rho_1(\mathbf p)\cup\rho_2(\mathbf p)\cup\rho_3(\mathbf p))\setminus\rho(\mathbf q).$$ 
Since $((x,i),(y,j))\not\in\rho(\mathbf q)$, there exists a function 
\begin{equation}
\label{phi}
\varphi\colon(c(\mathbf q)\times\{1,2\})/\rho(\mathbf q)\rightarrow\{1,2\}
\end{equation}
which separates the classes $(x,i)\rho(\mathbf q)$ and $(y,j)\rho(\mathbf q)$. We associate with $\varphi$ the function $\val_{\varphi}\colon c(\mathbf q)\rightarrow B_2$ defined as follows: 
$$\val_{\varphi}(z)=e_{\varphi((z,1)\rho(\mathbf q)),\varphi((z,2)\rho(\mathbf q))}.$$
The function $\val_{\varphi}$ extends to the set of all polynomials over $c(\mathbf q)$. Take a word $\mathbf w=x_1x_2\dots x_k$ in $\mathbf q$. For $i=1,2,\dots,k-1$, we have $((x_i,2),(x_{i+1},1))\in\rho_3(\mathbf q)\subseteq\rho(\mathbf q)$. Therefore, 
$$\val_\varphi(x_i)[2]=\varphi((x_i,2)\rho(\mathbf q))=\varphi((x_{i+1},1)\rho(\mathbf q))=\val_\varphi(x_{i+1})[1].$$ 
Therefore, by~(\ref{multiplication}) we have $\val_{\varphi}(\mathbf w)\neq0$ and
$$\val_{\varphi}(\mathbf w)[1]=\val_{\varphi}(x_1)[1]=\varphi((x_1,1)\rho(\mathbf q))=\varphi(\init(\mathbf q)).$$
Similarly, $\val_{\varphi}(\mathbf w)[2]=\varphi(\term(\mathbf q))$. Hence 
$$\val_{\varphi}(\mathbf w)=e_{\varphi(\init(\mathbf q)),\varphi(\term(\mathbf q))}.$$
This holds for each word $\mathbf w$ in $\mathbf q$, so additive idempotency implies
$$\val_{\varphi}(\mathbf q)=e_{\varphi(\init(\mathbf q)),\varphi(\term(\mathbf q))}.$$ 
Furthermore, the condition $((x,i),(y,j))\in\rho_1(\mathbf p)\cup\rho_2(\mathbf p)\cup\rho_3(\mathbf p)$ admits three possible cases.

\emph{Case}~1: $((x,i),(y,j))\in\rho_3(\mathbf p)$. This means that $i=2$, $j=1$, and $xy$ is a subword of a word $\mathbf w$ in $\mathbf p$. We have
$$\val_{\varphi}(x)[2]=\varphi((x,2)\rho(\mathbf p))\neq\varphi((y,1)\rho(\mathbf p))=\val_{\varphi}(y)[1].$$ 
Therefore, by~(\ref{multiplication}) we have $\val_{\varphi}(xy)=0$. Since $xy$ is a subword in $\mathbf w$, this implies $\val_{\varphi}(\mathbf w)=0$ and $\val_{\varphi}(\mathbf p)=0$.

\emph{Case}~2: $((x,i),(y,j))\in\rho_1(\mathbf p)$. This means that $i=j=1$ and $x$ and $y$ are first letters of two words $\mathbf w_1,\mathbf w_2\in\mathbf p$ respectively. We have either $\val_{\varphi}(\mathbf w_1)=0$, or $\val_{\varphi}(\mathbf w_2)=0$, or 
$$\val_{\varphi}(\mathbf w_1)[1]=\varphi((x,1)\rho(\mathbf q))\neq\varphi((y,1)\rho(\mathbf q))=\val_{\varphi}(\mathbf w_2)[1].$$ 
In either case, $\val_{\varphi}(\mathbf w_1+\mathbf w_2)=0$, whence $\val_{\varphi}(\mathbf p)=0$.

\emph{Case}~3: $((x,i),(y,j))\in\rho_2(\mathbf p)$. This case is dual to the previous one.

In all three cases, $\val_{\varphi}(\mathbf p)=0\neq\val_{\varphi}(\mathbf q)$, so the identity $\mathbf p\approx\mathbf q$ fails in $B_2$.

3) Suppose $c(\mathbf p)=c(\mathbf q)$, $\rho(\mathbf p)=\rho(\mathbf q)$, and $\init(\mathbf p)\neq\init(\mathbf q)$. There exists a function~(\ref{phi}) with $\varphi(\init(\mathbf p))\neq\varphi(\init(\mathbf q))$. The evaluation function $\val_{\varphi}$ is defined in the same way as in 2). Repeating the argument in 2), we conclude that $\val_{\varphi}(\mathbf p)=e_{\varphi(\init(\mathbf p)),\varphi(\term(\mathbf p))}$ and $\val_{\varphi}(\mathbf q)=e_{\varphi(\init(\mathbf q)),\varphi(\term(\mathbf q))}$. Since $\varphi(\init(\mathbf p))\neq\varphi(\init(\mathbf q))$, we conclude that $\val_{\varphi}(\mathbf p)\neq\val_{\varphi}(\mathbf q)$, so the identity $\mathbf p\approx\mathbf q$ fails in $B_2$.

4) This item is dual to the previous one.
\end{proof}

For convenience of references, note the following obvious principle:
\begin{equation}
\label{adding inequalities}\mathbf p\le\mathbf r\text{ and }\mathbf q\le\mathbf  r\text{ imply }\mathbf p+\mathbf q\le\mathbf r.
\end{equation}

We will need two other observations. We collect them into the following lemma.

\begin{lemma}
\label{easy}
\textup{1)} If $((x,i),(y,j))\in\rho(\mathbf p)$ and $(x,i)\neq(y,j)$ then there exists a sequence
\begin{equation}
\label{deduction}(z_1,n_1),(z_2,n_2),\dots,(z_k,n_k)
\end{equation}
such that $(z_1,n_1)=(x,i)$, $(z_k,n_k)=(y,j)$, and
$$((z_l,n_l),(z_{l+1},n_{l+1}))\in\rho_1(\mathbf p)\cup\rho_2(\mathbf p)\cup\rho_3(\mathbf p)\cup\rho_3(\mathbf p)^{-1}$$
for $l=1,2,\dots,k-1$.

\textup{2)} For $l=1,2,\dots,k-1$ we have
\begin{align*}
&((z_l,n_l),(z_{l+1},n_{l+1}))\in\rho_1(\mathbf p)\iff n_l=1\text{ and }n_{l+1}=1,\\
&((z_l,n_l),(z_{l+1},n_{l+1}))\in\rho_2(\mathbf p)\iff n_l=2\text{ and }n_{l+1}=2,\\
&((z_l,n_l),(z_{l+1},n_{l+1}))\in\rho_3(\mathbf p)\iff n_l=2\text{ and }n_{l+1}=1,\\
&((z_l,n_l),(z_{l+1},n_{l+1}))\in\rho_3(\mathbf p)^{-1}\iff n_l=1\text{ and }n_{l+1}=2.
\end{align*}
\end{lemma}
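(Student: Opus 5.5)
Both parts come straight from the definitions, so the plan is to unwind them.

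For 1), I would use the standard description of the equivalence relation generated by a binary relation $\tau$ on a set. It is the reflexive--transitive closure of $\tau\cup\tau^{-1}$. To justify this, note that this closure is reflexive, symmetric and transitive and contains $\tau$. It is also contained in every equivalence relation that contains $\tau$. I apply this with $\tau=\rho_1(\mathbf p)\cup\rho_2(\mathbf p)\cup\rho_3(\mathbf p)$. The relations $\rho_1(\mathbf p)$ and $\rho_2(\mathbf p)$ are symmetric by definition, since ``$x$ and $y$ are first (last) letters of two words in $\mathbf p$'' is symmetric in $x,y$. Hence $\tau\cup\tau^{-1}=\rho_1(\mathbf p)\cup\rho_2(\mathbf p)\cup\rho_3(\mathbf p)\cup\rho_3(\mathbf p)^{-1}$. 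Now if $((x,i),(y,j))\in\rho(\mathbf p)$ with $(x,i)\neq(y,j)$, the pair is not obtained by reflexivity. It is therefore a composite of finitely many steps in $\tau\cup\tau^{-1}$, and this composite is exactly a chain of the form~(\ref{deduction}).

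For 2), I would check both directions of each equivalence. The forward implications are immediate from the definitions. Pairs in $\rho_1(\mathbf p)$ have both second coordinates equal to $1$, pairs in $\rho_2(\mathbf p)$ have both equal to $2$, pairs in $\rho_3(\mathbf p)$ have the form $((\cdot,2),(\cdot,1))$, and pairs in $\rho_3(\mathbf p)^{-1}$ have the form $((\cdot,1),(\cdot,2))$. For the reverse implications, I use part 1): each consecutive pair of the chain lies in at least one of the four relations. These four relations have pairwise distinct patterns $(n_l,n_{l+1})$, namely $(1,1)$, $(2,2)$, $(2,1)$ and $(1,2)$. So if $(n_l,n_{l+1})=(1,1)$, the pair cannot lie in any of the other three relations and must lie in $\rho_1(\mathbf p)$; the other three cases follow in the same way.

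I expect no genuine obstacle here. The only point needing care is that the reverse implications in 2) depend on the sequence coming from 1), where each step is known to lie in the union. Apart from that, the proof is bookkeeping about which coordinate patterns the four relations can have.
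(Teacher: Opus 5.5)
Your proposal is correct and follows the paper's proof. Part 1) describes the generated equivalence as $\Delta$ together with the transitive closure of $\tau\cup\tau^{-1}$ and uses the symmetry of $\rho_1(\mathbf p)$ and $\rho_2(\mathbf p)$, and part 2) reads off the coordinate patterns from the definitions. Your explicit remark that the reverse implications in 2) rely on each step of the chain from 1) lying in the union, so that the four distinct patterns $(1,1),(2,2),(2,1),(1,2)$ determine the relation, usefully spells out what the paper calls immediate.
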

\begin{proof}
1) The equivalence relation generated by a relation $\alpha$ can be described as $\Delta\cup\cl(\alpha\cup\alpha^{-1})$ where $\Delta$ is the equality relation and $\cl$ is the transitive closure operator. If $(x,i)=(y,j)$ then we can take the trivial sequence~(\ref{deduction}) with $k=1$. Suppose $(x,i)\neq(y,j)$. Since the relations $\rho_1(\mathbf p)$ and $\rho_2(\mathbf p)$ are symmetric by definition, we have
\begin{align*}
&((x,i),(y,j))\in\cl((\rho_1(\mathbf p)\cup\rho_2(\mathbf p)\cup\rho_3(\mathbf p))\cup(\rho_1(\mathbf p)\cup\rho_2(\mathbf p)\cup\rho_3(\mathbf p))^{-1})\\
&=\cl(\rho_1(\mathbf p)\cup\rho_2(\mathbf p)\cup\rho_3(\mathbf p)\cup\rho_1(\mathbf p)^{-1}\cup\rho_2(\mathbf p)^{-1}\cup\rho_3(\mathbf p)^{-1})\\
&=\cl(\rho_1(\mathbf p)\cup\rho_2(\mathbf p)\cup\rho_3(\mathbf p)\cup\rho_3(\mathbf p)^{-1}).
\end{align*}

2) This statement immediately follows from the definitions of $\rho_1(\mathbf p)$, $\rho_2(\mathbf p)$, and $\rho_3(\mathbf p)$.
\end{proof}

We write $\mathbf p\approx_{\Sigma}\mathbf q$ [$\mathbf p\le_{\Sigma}\mathbf q$, $\mathbf p\ge_{\Sigma}\mathbf q$] if the identity $\mathbf p\approx\mathbf q$ [$\mathbf p\le\mathbf q$, $\mathbf p\ge\mathbf q$] follows from $\Sigma$.

\begin{lemma}
\label{first letters} Let $\mathbf p$ be a polynomial and $x\in c(\mathbf p)$. If $(x,1)\in\init(\mathbf p)$ then there exists a word $x\mathbf w\le_{\Sigma}\mathbf p$.
\end{lemma}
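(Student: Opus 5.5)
The plan is to prove a stronger statement by induction along the chains of Lemma~\ref{easy}. It controls pairs of both types $(z,1)$ and $(z,2)$ lying in $\init(\mathbf p)$:

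(A) if $(z,1)\in\init(\mathbf p)$, then there is a word $z\mathbf w\le_\Sigma\mathbf p$, with $\mathbf w$ possibly empty;

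(B) if $(z,2)\in\init(\mathbf p)$, then there are words $\mathbf a$ and $\mathbf c$, with $\mathbf a$ possibly empty and $\mathbf c$ nonempty, such that $\mathbf a z\mathbf c\le_\Sigma\mathbf p$ and $\mathbf c\le_\Sigma\mathbf p$.

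The lemma is then (A) for $z=x$. By definition $\init(\mathbf p)$ contains $(y,1)$ for the first letter $y$ of some word $y\mathbf w$ of $\mathbf p$, which gives the base case. By Lemma~\ref{easy}, any other element of $\init(\mathbf p)$ is reached from $(y,1)$ by a chain~(\ref{deduction}). So it suffices to show that each single step of types $\rho_1,\rho_2,\rho_3,\rho_3^{-1}$ preserves (A)/(B). Throughout, inequalities are combined by transitivity and by~(\ref{adding inequalities}).

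The four steps are handled as follows.

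\emph{Step $\rho_1$.} Here $(z,1)\to(t,1)$ with $t$ the first letter of a word $t\mathbf w$ in $\mathbf p$. This is trivial, since $t\mathbf w\le\mathbf p$.

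\emph{Step $\rho_3^{-1}$.} Here $(t,1)\to(z,2)$, where $\mathbf a zt\mathbf b$ is a word of $\mathbf p$ and (A) gives $t\mathbf w\le_\Sigma\mathbf p$. Take $\mathbf c=t\mathbf w$. The identity~(\ref{crossing}) with $x_1=\mathbf az$, $y=t$, $z_2=\mathbf w$, $z_1=\mathbf b$ and empty $x_2$ gives $\mathbf azt\mathbf w\le\mathbf azt\mathbf b+t\mathbf w\le_\Sigma\mathbf p$.

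\emph{Step $\rho_3$.} Here $(z,2)\to(t,1)$, where $\mathbf a'zt\mathbf b$ is a word of $\mathbf p$ and (B) gives $\mathbf az\mathbf c,\mathbf c\le_\Sigma\mathbf p$. First apply~(\ref{crossing}) with $x_1=\mathbf a'$, $y=z$, $z_2=\mathbf c$, $z_1=t\mathbf b$, $x_2=\mathbf a$. This yields $\mathbf a'z\mathbf c\le\mathbf a'zt\mathbf b+\mathbf az\mathbf c\le_\Sigma\mathbf p$. Then apply~(\ref{rook}) with $x_1=\mathbf a'z$, $z_2=t\mathbf b$, $z_1=\mathbf c$ and empty $x_2$. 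This gives $t\mathbf b\le\mathbf a'z\mathbf c+\mathbf a'zt\mathbf b+\mathbf c\le_\Sigma\mathbf p$, which is (A) for $t$.

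\emph{Step $\rho_2$.} Here $(z,2)\to(u,2)$, with $\mathbf ez$ and $\mathbf du$ words of $\mathbf p$ and $\mathbf az\mathbf c,\mathbf c\le_\Sigma\mathbf p$. I keep the same $\mathbf c$ and aim to show $\mathbf du\mathbf c\le_\Sigma\mathbf p$. Applying~(\ref{crossing}) with $x_1=\mathbf e$, $y=z$, $z_2=\mathbf c$, empty $z_1$ and $x_2=\mathbf a$ gives $\mathbf ez\mathbf c\le\mathbf ez+\mathbf az\mathbf c\le_\Sigma\mathbf p$. Then~(\ref{rook}) with $x_2=\mathbf du$, $z_2=\mathbf c$, $x_1=\mathbf ez$ and empty $z_1$ gives $\mathbf du\mathbf c\le\mathbf ez+\mathbf ez\mathbf c+\mathbf du\le_\Sigma\mathbf p$.

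The main obstacle is finding the right invariant for the pairs $(z,2)$. A naive guess such as ``some word ending in $z$ lies below $\mathbf p$'' does not survive the $\rho_3$ step. The two-word condition (B) is exactly what is needed to feed~(\ref{rook}) with empty $x_2$. When writing the proof out, the one thing to double-check is that the empty-letter conventions of~(\ref{rook}) and~(\ref{crossing}) allow each specialization used above, i.e.\ that every substituted word is either a genuine word or an allowed empty letter.
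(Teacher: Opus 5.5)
Your proof is correct. This includes the empty-variable bookkeeping you flagged. Every instance of (\ref{crossing}) has a nonempty middle word. The only place where an empty variable could produce an empty term is (\ref{rook}) with $x_2$ empty in the $\rho_3$ step, and that is safe because (B) requires $\mathbf c$ to be nonempty. The paper uses (\ref{rook}) with empty $x_2$ or empty $z_1$ in the same way.

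Your route differs from the paper's in how the induction is organized. The paper inducts on the length $k$ of the chain (\ref{deduction}) from $(y,1)$ to $(x,1)$. It cuts the chain at the last entry $(z_t,1)$ before $(x,1)$ whose second coordinate is $1$. The tail is then one of three shapes:
\begin{itemize}
\item a single $\rho_1(\mathbf p)$ step;
\item a $\rho_3(\mathbf p)^{-1}$ step followed by a $\rho_3(\mathbf p)$ step;
\item a $\rho_3(\mathbf p)^{-1}$ step, a run of $\rho_2(\mathbf p)$ steps, and a $\rho_3(\mathbf p)$ step.
\end{itemize}
The paper never states an invariant for pairs $(z,2)$. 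In the third case it uses only that the two ends $z_{t+1}$ and $z_{k-1}$ of the run are last letters of words of $\mathbf p$. It then crosses the whole run with a single application of (\ref{crossing}) and (\ref{rook}), which is in effect your $\rho_2$ step performed once.

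Your two-part invariant (A)/(B) turns the argument into a uniform check that each of the four elementary step types preserves it. This removes the case analysis on the shape of the tail. The cost is an auxiliary statement and repeating the crossing-plus-rook derivation at every $\rho_2(\mathbf p)$ step rather than once per run. The identities invoked and the individual derivations are essentially the same as the paper's.
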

\begin{proof}
Let $y$ be the first letter of a word in $\mathbf p$. We have $(x,1)\in\init(\mathbf p)$ and $(y,1)\in\init(\mathbf p)$, so $((y,1),(x,1))\in\rho(\mathbf p)$. Using Lemma~\ref{easy}.1), consider the corresponding sequence~(\ref{deduction}). We use induction on the length $k$ of this sequence. 

\textbf{Induction base:} $k=1$. In this case $x=y$, so there exists a word $x\mathbf w$ in $\mathbf p$. In particular, $x\mathbf w\le_{\Sigma}\mathbf p$.

\textbf{Induction step.} Let $(z_t,n_t)$ be the last element of the sequence~(\ref{deduction}) such that $t\le k-1$ and $n_t=1$. There are three possible cases.

\emph{Case}~1: $t=k-1$. By Lemma~\ref{easy}.2), we have 
$$((z_{k-1},1),(x,1))\in\rho_1(\mathbf p).$$ 
By definition of $\rho_1(\mathbf p)$, there exists a word $x\mathbf w$ in $\mathbf p$.

\emph{Case}~2: $t=k-2$. The last three elements of the sequence~(\ref{deduction}) are $(z_{k-2},1),(z_{k-1},2),(x,1)$. By Lemma~\ref{easy}.2), we have 
$$((z_{k-2},1),(z_{k-1},2))\in\rho_3(\mathbf p)^{-1}$$ 
and 
$$((z_{k-1},2),(x,1))\in\rho_3(\mathbf p).$$
By definition of $\rho_3(\mathbf p)$, there exist words $\mathbf u_1z_{k-1}z_{k-2}\mathbf u_2$, $\mathbf v_1z_{k-1}x\mathbf v_2$ in $\mathbf p$. By induction hypothesis, we have $z_{k-2}\mathbf w'\le_{\Sigma}\mathbf p$ for some $\mathbf w'$. Hence
\begin{align*}
\mathbf p&\ge_{\Sigma}\mathbf u_1z_{k-1}z_{k-2}\mathbf u_2+\mathbf v_1z_{k-1}x\mathbf v_2+z_{k-2}\mathbf w'&&\text{by~(\ref{adding inequalities})}\\
&\ge_{\Sigma}\mathbf u_1z_{k-1}z_{k-2}\mathbf u_2+\mathbf u_1z_{k-1}x\mathbf v_2+z_{k-2}\mathbf w'&&\text{by~(\ref{crossing}) and~(\ref{adding inequalities})}\\
&\ge_{\Sigma}\mathbf u_1z_{k-1}z_{k-2}\mathbf u_2+\mathbf u_1z_{k-1}x\mathbf v_2+z_{k-2}\mathbf u_2&&\text{by~(\ref{crossing}) and~(\ref{adding inequalities})}\\
&\ge_{\Sigma}x\mathbf v_2.&&\text{by~(\ref{rook})}
\end{align*}

\emph{Case}~3: $t\le k-3$. Consider the elements $(z_t,1)$, $(z_{t+1},2)$, $(z_{k-1},2)$, $(x,1)$ of the sequence~(\ref{deduction}). By Lemma~\ref{easy}.2), we have
$$(z_{t+1},2)\,\rho_2(\mathbf p)\,(z_{t+2},2)\,\rho_2(\mathbf p)\cdots\rho_2(\mathbf p)\,(z_{k-1},2).$$
Therefore, there exist words $\mathbf w_1z_{t+1}$ and $\mathbf w_2z_{k-1}$ in $\mathbf p$. By induction hypothesis, there exists a word $z_t\mathbf w_3\le_{\Sigma}\mathbf p$. By Lemma~\ref{easy}.2), we have 
$$((z_t,1),(z_{t+1},2))\in\rho_3(\mathbf p)^{-1}$$ 
and 
$$((z_{k-1},2),(x,1))\in\rho_3(\mathbf p).$$ 
Therefore, there exist words $\mathbf u_1z_{t+1}z_t\mathbf u_2$ and $\mathbf v_1z_{k-1}x\mathbf v_2$ in $\mathbf p$. We have
\begin{align*}
\mathbf p&\ge_{\Sigma}\mathbf w_1z_{t+1}+\mathbf u_1z_{t+1}z_t\mathbf u_2+\mathbf w_2z_{k-1}&&\text{by~(\ref{adding inequalities})}\\
&\ge_{\Sigma}\mathbf u_1z_{t+1}+\mathbf u_1z_{t+1}z_t\mathbf u_2+\mathbf w_2z_{k-1}&&\text{by~(\ref{crossing}) and~(\ref{adding inequalities})}\\
&\ge_{\Sigma}\mathbf w_2z_{k-1}z_t\mathbf u_2,&&\text{by~(\ref{rook})}
\end{align*}
whence
\begin{align*}
\mathbf p&\ge_{\Sigma}\mathbf w_2z_{k-1}z_t\mathbf u_2+\mathbf v_1z_{k-1}x\mathbf v_2+z_t\mathbf w_3&&\text{by~(\ref{adding inequalities})}\\
&\ge_{\Sigma}\mathbf v_1z_{k-1}z_t\mathbf u_2+\mathbf v_1z_{k-1}x\mathbf v_2+z_t\mathbf w_3&&\text{by~(\ref{crossing}) and~(\ref{adding inequalities})}\\
&\ge_{\Sigma}\mathbf v_1z_{k-1}z_t\mathbf u_2+\mathbf v_1z_{k-1}x\mathbf v_2+z_t\mathbf u_2&&\text{by~(\ref{crossing}) and~(\ref{adding inequalities})}\\
&\ge_{\Sigma}x\mathbf v_2.&&\text{by~(\ref{rook})}
\end{align*}
\end{proof}

\begin{corollary}
\label{last letters} Let $\mathbf p$ be a polynomial and $x\in c(\mathbf p)$. If $(x,2)\in\term(\mathbf p)$ then there exists a word $\mathbf wx\le_{\Sigma}\mathbf p$.
\end{corollary}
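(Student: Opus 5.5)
This follows from Lemma~\ref{first letters} by a duality argument. For a word $\mathbf w=y_1y_2\cdots y_m$ we write $\overleftarrow{\mathbf w}=y_m\cdots y_2y_1$ for its reversal. For a polynomial $\mathbf p$ we write $\overleftarrow{\mathbf p}$ for the polynomial obtained by reversing every word of $\mathbf p$.

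The first step is to check that $\Sigma$ is closed under reversal. Reversing (\ref{rook}) gives $z_2x_2\le z_1x_1+z_2x_1+z_1x_2$. After renaming $x_i\leftrightarrow z_i$ this is again an instance of (\ref{rook}). Reversing (\ref{crossing}) gives $z_2yx_1\le z_1yx_1+z_2yx_2$. After renaming $z_2\to x_1$, $z_1\to x_2$, $x_1\to z_2$, $x_2\to z_1$ it becomes $x_1yz_2\le x_2yz_2+x_1yz_1$, which is (\ref{crossing}) with the summands on the right swapped. The ai-semiring axioms are also self-dual under reversal, since reversal is an anti-automorphism of multiplication and preserves addition. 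Hence any derivation of $\mathbf u\le\mathbf v$ from $\Sigma$ becomes, after reversal, a derivation of $\overleftarrow{\mathbf u}\le\overleftarrow{\mathbf v}$. In particular $\mathbf u\le_\Sigma\mathbf v$ implies $\overleftarrow{\mathbf u}\le_\Sigma\overleftarrow{\mathbf v}$.

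The second step is to relate $\rho(\overleftarrow{\mathbf p})$ to $\rho(\mathbf p)$. Consider the swap map $\sigma\colon(x,i)\mapsto(x,3-i)$ on $c(\mathbf p)\times\{1,2\}$. Under reversal, first letters become last letters, so $\sigma$ carries $\rho_1(\mathbf p)$ onto $\rho_2(\overleftarrow{\mathbf p})$ and $\rho_2(\mathbf p)$ onto $\rho_1(\overleftarrow{\mathbf p})$. A subword $xy$ of $\mathbf p$ becomes a subword $yx$ of $\overleftarrow{\mathbf p}$. So the pair $((x,2),(y,1))\in\rho_3(\mathbf p)$ is sent by $\sigma$ to $((x,1),(y,2))$, which is the inverse of $((y,2),(x,1))\in\rho_3(\overleftarrow{\mathbf p})$. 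Since $\rho$ is the generated equivalence, it follows that $\sigma(\rho(\mathbf p))=\rho(\overleftarrow{\mathbf p})$ and $\sigma(\term(\mathbf p))=\init(\overleftarrow{\mathbf p})$.

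Now suppose $(x,2)\in\term(\mathbf p)$. Then $(x,1)\in\init(\overleftarrow{\mathbf p})$, so Lemma~\ref{first letters} gives a word $x\mathbf w'\le_\Sigma\overleftarrow{\mathbf p}$. Reversing by the first step yields $\overleftarrow{\mathbf w'}x\le_\Sigma\mathbf p$, so we may take $\mathbf w=\overleftarrow{\mathbf w'}$.

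The only point needing care is the closure of $\Sigma$ under reversal, in particular matching the renamed instances, including the cases where some of $x_1,x_2,z_1,z_2$ are empty; the rest is routine bookkeeping.
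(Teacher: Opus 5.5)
Your proposal is correct and is the same duality argument the paper uses: the paper's proof is the single remark that the statement is dual to Lemma~\ref{first letters}. You supply the details this remark leaves implicit. These are that $\Sigma$ and the ai-semiring axioms are closed under word reversal, and that the swap $(x,i)\mapsto(x,3-i)$ carries $\rho(\mathbf p)$ onto $\rho$ of the reversed polynomial and $\term(\mathbf p)$ onto its initial class.
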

\begin{proof}
This statement is dual to Lemma~\ref{first letters}.
\end{proof}

\begin{lemma} 
\label{inner letters} Let $\mathbf p$ be a polynomial and $x,y\in c(\mathbf p)$. If $((x,2),(y,1))\in\rho(\mathbf p)$ then there exists a word $\mathbf w_1xy\mathbf w_2\le_{\Sigma}\mathbf p$. 
\end{lemma}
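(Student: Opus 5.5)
We argue by induction on the length $k$ of a sequence~(\ref{deduction}) from $(x,2)$ to $(y,1)$ given by Lemma~\ref{easy}.1). The argument mirrors the proof of Lemma~\ref{first letters}, but now both ends of the chain have to be controlled. Since $n_1=2$ and $n_k=1$, the chain contains at least one step of type $\rho_3(\mathbf p)$, that is, a step $(z_s,2),(z_{s+1},1)$. If $k=2$, the pair itself lies in $\rho_3(\mathbf p)$, so $xy$ is a subword of a word of $\mathbf p$ and we are done.

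For the induction step, let $(z_t,n_t)$ be the last element with $t\le k-1$ and $n_t=2$. By Lemma~\ref{easy}.2), the elements $(z_{t+1},1),\dots,(z_k,1)$ are linked by $\rho_1(\mathbf p)$-steps, and $(z_t,2),(z_{t+1},1)$ is a $\rho_3(\mathbf p)$-step. Hence there is a word $\mathbf v_1z_ty'\mathbf v_2$ in $\mathbf p$ with $y'=z_{t+1}$. If $t+1<k$, both $y'$ and $y$ are first letters of words of $\mathbf p$, so there is a word $y\mathbf w'$ in $\mathbf p$.

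We now split into cases according to how $(x,2)$ reaches $(z_t,2)$.
\begin{itemize}
\item[(a)] If $t=1$, then $z_t=x$. Either $t+1=k$ and we are done, or we combine $\mathbf v_1xy'\mathbf v_2$, $y'\mathbf v_2$ and $y\mathbf w'$. The word $y'\mathbf v_2$ is obtained by (\ref{crossing}) applied to the two words beginning with $y'$ and $y$, exactly as in Case~2 of Lemma~\ref{first letters}. Then (\ref{rook}) with $x_1=\mathbf v_1x$, $x_2=\mathbf v_1x$, $z_1=y'\mathbf v_2$, $z_2=y\mathbf w'$ yields $\mathbf v_1xy\mathbf w'\le_\Sigma\mathbf p$. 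I expect a precise choice of which side of (\ref{rook}) to feed will need small adjustments, using (\ref{crossing}) first to make the prefixes equal.
\item[(b)] If $t>1$, the step entering $(z_t,2)$ is either of type $\rho_2(\mathbf p)$ or $\rho_3(\mathbf p)^{-1}$. In the first subcase, $z_t$ and $z_{t-1}$ are last letters of words. In the second subcase, $(z_{t-1},1),(z_t,2)$ comes from a subword $z_tz_{t-1}$. In either subcase we locate an earlier $\rho_3$-pair $(x,2)\sim(u,1)$ along a shorter chain. The induction hypothesis gives $\mathbf a\,xu\,\mathbf b\le_\Sigma\mathbf p$. Then we perform the same ``transport'' manoeuvre as in Case~3 of Lemma~\ref{first letters}: by (\ref{crossing}) we align prefixes/suffixes across the intermediate words, and by (\ref{rook}) we splice $x$ onto the first-letter material $y\cdots$.
\end{itemize}

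A cleaner option is to first use Corollary~\ref{last letters} and Lemma~\ref{first letters} locally: to the polynomial formed by the suffixes after $z_t$ in the relevant words, and to the prefixes before $y'$. This would reduce everything to one application of (\ref{rook}).

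The main obstacle is that the induction hypothesis is only about pairs of the form $((\cdot,2),(\cdot,1))$, whereas the chain segment between $(x,2)$ and $(z_t,2)$ ends in a $2$-coordinate. To overcome this, I would first try to strengthen the statement into a simultaneous induction over all three kinds of $\rho(\mathbf p)$-pairs:
\begin{itemize}
\item[(i)] for $(u,2)\sim(v,1)$: a word $\mathbf w_1uv\mathbf w_2\le_\Sigma\mathbf p$;
\item[(ii)] for $(u,1)\sim(v,1)$: words $\mathbf au\mathbf b,\ \mathbf av\mathbf c\le_\Sigma\mathbf p$ with a common prefix $\mathbf a$;
\item[(iii)] for $(u,2)\sim(v,2)$: words $\mathbf bu\mathbf a,\ \mathbf cv\mathbf a\le_\Sigma\mathbf p$ with a common suffix $\mathbf a$.
\end{itemize}
Each single chain step then extends one of these by one application of (\ref{crossing}) (to equalize the prefix or suffix) followed by (\ref{rook}) (to splice).
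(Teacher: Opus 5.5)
The strategy you end up with is sound, but the argument actually written out in (a)--(b) has a hole, and the sound version is only asserted.

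\textbf{The hole in (b).} In the $\rho_2(\mathbf p)$-subcase of (b), the part of the chain from $(x,2)$ to $(z_t,2)$ may consist of $\rho_2(\mathbf p)$-steps only. An example is $(x,2)\,\rho_2(\mathbf p)\,(z,2)\,\rho_3(\mathbf p)\,(y,1)$ for $\mathbf p=x+z+zy$. Then there is no earlier pair $((x,2),(u,1))$, so the induction hypothesis has nothing to act on. (When such a pair does exist, taking the last one puts you exactly in the configuration of Case~3 of Lemma~\ref{first letters}, and your transport goes through.)

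The missing case is easy once isolated. Here $x$ is itself the last letter of a word $\mathbf sx$ of $\mathbf p$. Take a word $\mathbf vz_t$ of $\mathbf p$. Then $\mathbf vz_ty'\mathbf v_2\le\mathbf vz_t+\mathbf v_1z_ty'\mathbf v_2$ by (\ref{crossing}), and $\mathbf sxy'\mathbf v_2\le\mathbf vz_t+\mathbf vz_ty'\mathbf v_2+\mathbf sx$ by (\ref{rook}).

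The paper inducts on the last one or two steps of the chain, not on the last element with second coordinate $2$. It meets this situation in its Case~2 and resolves it by noting that a $\rho_2(\mathbf p)$-step puts the whole class into $\term(\mathbf p)$. Corollary~\ref{last letters} then supplies $\mathbf ux\le_\Sigma\mathbf p$.

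\textbf{The instance in (a).} Your instance of (\ref{rook}) in (a) is vacuous, since $x_1=x_2$ makes it trivial. The right instance is $x_1$ empty, $x_2=\mathbf v_1x$, $z_1=y'\mathbf v_2$, $z_2=y\mathbf w'$. Moreover, $y'\mathbf v_2$ comes from (\ref{crossing}) applied to a word $y'\mathbf t$ of $\mathbf p$ and to $\mathbf v_1xy'\mathbf v_2$, not to the words beginning with $y'$ and $y$.

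\textbf{The strengthened induction.} Your fallback does close the gap and is a genuinely different route, but you have only asserted the step. Since every chain starts at $(x,2)$, only (i) and (iii) are needed; (ii) is superfluous here. The case $k=1$ is covered by (iii), using any word containing $x$ taken twice.

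Let the current witness be $\mathbf w_1xw\mathbf w_2$ for (i), or $\mathbf bx\mathbf a,\mathbf cw\mathbf a$ for (iii). The possible last steps supply the following words of $\mathbf p$:
a $\rho_1(\mathbf p)$-step $(w,1),(v,1)$ supplies $w\mathbf f$ and $v\mathbf g$;
a $\rho_3(\mathbf p)^{-1}$-step $(w,1),(v,2)$ supplies $\mathbf dvw\mathbf e$;
a $\rho_2(\mathbf p)$-step $(w,2),(v,2)$ supplies $\mathbf fw$ and $\mathbf gv$;
a $\rho_3(\mathbf p)$-step $(w,2),(v,1)$ supplies $\mathbf dwv\mathbf e$.
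In each row below, the first inequality is an instance of (\ref{crossing}) and the second of (\ref{rook}):
\begin{align*}
\rho_1\colon\quad&w\mathbf w_2\le w\mathbf f+\mathbf w_1xw\mathbf w_2,\quad\mathbf w_1xv\mathbf g\le w\mathbf w_2+v\mathbf g+\mathbf w_1xw\mathbf w_2;\\
\rho_3^{-1}\colon\quad&\mathbf dvw\mathbf w_2\le\mathbf dvw\mathbf e+\mathbf w_1xw\mathbf w_2;\\
\rho_2\colon\quad&\mathbf cw\le\mathbf cw\mathbf a+\mathbf fw,\quad\mathbf bx\le\mathbf cw\mathbf a+\mathbf cw+\mathbf bx\mathbf a;\\
\rho_3\colon\quad&\mathbf cwv\mathbf e\le\mathbf cw\mathbf a+\mathbf dwv\mathbf e,\quad\mathbf bxv\mathbf e\le\mathbf cw\mathbf a+\mathbf cwv\mathbf e+\mathbf bx\mathbf a.
\end{align*}
The rows yield, respectively:
(i) for $v$;
(iii) with common suffix $w\mathbf w_2$;
(iii) with witnesses $\mathbf bx$, $\mathbf gv$ and empty suffix;
(i) for $v$.

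\textbf{Comparison with the paper.} The paper's Case~1 is your first row. Its Case~3 is your second row followed by the fourth. Its Case~2 is the fourth row with $\mathbf a$ empty, the witnesses being supplied by Corollary~\ref{last letters}. So your third row is exactly where your invariant replaces the appeal to that corollary: (iii) with empty suffix \emph{is} the terminal-class information, regenerated inside the induction.

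Your route is therefore self-contained and uniform. Run from a first letter with (ii) and the mirror image of (i), it reproves Lemma~\ref{first letters} as well. The cost is carrying a stronger statement. The paper keeps the lemma as stated and shortens its proof by delegating the $\rho_2$-steps to the already established corollary, which in turn rests on the more intricate Lemma~\ref{first letters}.
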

\begin{proof}
Using Lemma~\ref{easy}.1), consider the sequence~(\ref{deduction}). We use induction on $k$.

\textbf{Induction base:} $k=2$. By Lemma~\ref{easy}, we have 
$$((x,2),(y,1))\in\rho_3(\mathbf p).$$ 
Hence there exists a word $\mathbf w_1xy\mathbf w_2$ in $\mathbf p$.

\textbf{Induction step.} Consider three cases.

\textbf{Case~1:} $n_{k-1}=1$. By induction hypothesis, there exists a word $\mathbf u_1xz_{k-1}\mathbf u_2\le_{\Sigma}\mathbf p$. By Lemma~\ref{easy}.2), we have 
$$((z_{k-1},1),(y,1))\in\rho_2(\mathbf p),$$
whence there exist words $z_{k-1}\mathbf v_1$ and $y\mathbf v_2$ in $\mathbf p$. Hence
\begin{align*}
\mathbf p&\ge_{\Sigma}z_{k-1}\mathbf v_1+y\mathbf v_2+\mathbf u_1xz_{k-1}\mathbf u_2&&\text{by~(\ref{adding inequalities})}\\
&\ge_{\Sigma}z_{k-1}\mathbf u_2+y\mathbf v_2+\mathbf u_1xz_{k-1}\mathbf u_2&&\text{by~(\ref{crossing}) and~(\ref{adding inequalities})}\\
&\ge_{\Sigma}\mathbf u_1xy\mathbf v_2.&&\text{by~(\ref{rook})}
\end{align*}

\textbf{Case~2:} $n_{k-2}=n_{k-1}=2$. By Lemma~\ref{easy}.2), 
$$((z_{k-2},2),(z_{k-1},2))\in\rho_2(\mathbf p),$$ 
whence $(z_i,n_i)\in\term(\mathbf p)$ for $i=1,\dots,k$. By Corollary~\ref{last letters}, there exist words $\mathbf ux\le_{\Sigma}\mathbf p$, $\mathbf vz_{k-1}\le_{\Sigma}\mathbf p$. By Lemma~\ref{easy}.2), 
$$((z_{k-1},2),(y,1))\in\rho_3(\mathbf p),$$ 
whence there exists a word $\mathbf w_1z_{k-1}y\mathbf w_2$ in $\mathbf p$. We have
\begin{align*}
\mathbf p&\ge_{\Sigma}\mathbf vz_{k-1}+\mathbf w_1z_{k-1}y\mathbf w_2+\mathbf ux&&\text{by~(\ref{adding inequalities})}\\
&\ge_{\Sigma}\mathbf vz_{k-1}+\mathbf vz_{k-1}y\mathbf w_2+\mathbf ux&&\text{by~(\ref{crossing}) and~(\ref{adding inequalities})}\\
&\ge_{\Sigma}\mathbf uxy\mathbf w_2.&&\text{by~(\ref{rook})}
\end{align*}

\textbf{Case~3:} $n_{k-2}=1$ and $n_{k-1}=2$. By induction hypothesis, there exists a word $\mathbf u_1xz_{k-2}\mathbf u_2\le_{\Sigma}\mathbf p$. By Lemma~\ref{easy}.2), 
$$((z_{k-2},1),(z_{k-1},2))\in\rho_3(\mathbf p)^{-1}$$ 
and 
$$((z_{k-1},2),(y,1))\in\rho_3(\mathbf p).$$ 
Therefore, there exist words $\mathbf v_1z_{k-1}z_{k-2}\mathbf v_2$ and $\mathbf w_1z_{k-1}y\mathbf w_2$ in $\mathbf p$. We have
\begin{align*}
\mathbf p&\ge_{\Sigma}\mathbf v_1z_{k-1}z_{k-2}\mathbf v_2+\mathbf w_1z_{k-1}y\mathbf w_2+\mathbf u_1xz_{k-2}\mathbf u_2&&\text{by~(\ref{adding inequalities})}\\
&\ge_{\Sigma}\mathbf v_1z_{k-1}z_{k-2}\mathbf u_2+\mathbf w_1z_{k-1}y\mathbf w_2+\mathbf u_1xz_{k-2}\mathbf u_2&&\text{by~(\ref{crossing}) and~(\ref{adding inequalities})}\\
&\ge_{\Sigma}\mathbf v_1z_{k-1}z_{k-2}\mathbf u_2+\mathbf v_1z_{k-1}y\mathbf w_2+\mathbf u_1xz_{k-2}\mathbf u_2&&\text{by~(\ref{crossing}) and~(\ref{adding inequalities})}\\
&\ge_{\Sigma}\mathbf u_1xy\mathbf w_2.&&\text{by~(\ref{rook})}
\end{align*}
\end{proof}

\begin{lemma}
\label{final} Each identity which satisfies the conditions \textup{1)}--\textup{4)} of Theorem~\textup{\ref{main2}} follows from $\Sigma$.
\end{lemma}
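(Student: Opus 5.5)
By symmetry it suffices to derive $\mathbf q\le_{\Sigma}\mathbf p$. The conditions 1)--4) are symmetric in $\mathbf p$ and $\mathbf q$, so swapping the roles then gives $\mathbf p\le_{\Sigma}\mathbf q$, and together these yield $\mathbf p\approx_{\Sigma}\mathbf q$. By~(\ref{adding inequalities}) it is enough to prove $\mathbf w\le_{\Sigma}\mathbf p$ for each single word $\mathbf w=x_1x_2\cdots x_k$ occurring in $\mathbf q$.

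Fix such a word. By condition 1) every $x_i$ lies in $c(\mathbf p)$. Since $(x_1,1)\in\init(\mathbf q)=\init(\mathbf p)$, Lemma~\ref{first letters} gives a word $x_1\mathbf a\le_{\Sigma}\mathbf p$. Since $(x_k,2)\in\term(\mathbf q)=\term(\mathbf p)$, Corollary~\ref{last letters} gives a word $\mathbf b x_k\le_{\Sigma}\mathbf p$. For each $i<k$ we have $((x_i,2),(x_{i+1},1))\in\rho_3(\mathbf q)\subseteq\rho(\mathbf q)=\rho(\mathbf p)$, so Lemma~\ref{inner letters} gives words $\mathbf u_i x_i x_{i+1}\mathbf v_i\le_{\Sigma}\mathbf p$. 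Thus $\mathbf p$ dominates, modulo $\Sigma$, a sum of words that each witness one ``edge'' of $\mathbf w$: an initial letter, a terminal letter, and every consecutive pair.

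The main step is to glue these witnesses together, by induction on $j$, into words of the form $x_1x_2\cdots x_j\mathbf c_j\le_{\Sigma}\mathbf p$. The base case $j=1$ is $x_1\mathbf a$. For the step, suppose $x_1\cdots x_j\mathbf c_j\le_{\Sigma}\mathbf p$, and take the witness $\mathbf u_jx_jx_{j+1}\mathbf v_j$.
\begin{itemize}
\item Apply~(\ref{crossing}) with middle letter $x_j$ to the pair $x_1\cdots x_{j-1}\cdot x_j\cdot\mathbf c_j$ and $\mathbf u_j\cdot x_j\cdot x_{j+1}\mathbf v_j$. This yields $x_1\cdots x_jx_{j+1}\mathbf v_j\le_{\Sigma}\mathbf p$.
\item When $j=1$ the prefix is empty. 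This is allowed, since the variables in~(\ref{crossing}) may be empty.
\end{itemize}
This reproduces the manipulations used in the proofs of the preceding lemmas, with~(\ref{crossing}) cutting and rejoining two words at a common letter. At $j=k$ we obtain $x_1\cdots x_k\mathbf c\le_{\Sigma}\mathbf p$.

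It remains to cut off the tail $\mathbf c$. Apply~(\ref{crossing}) with middle letter $x_k$ to the pair $x_1\cdots x_{k-1}\cdot x_k\cdot\mathbf c$ and $\mathbf b\cdot x_k\cdot(\text{empty})$. This gives $x_1\cdots x_k\le_{\Sigma}(x_1\cdots x_k\mathbf c)+\mathbf bx_k\le_{\Sigma}\mathbf p$, since in~(\ref{crossing}) $z_2$ is allowed to be empty while $z_1=\mathbf c$. If the tail $\mathbf c$ is already empty, nothing needs to be done.

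The expected obstacle is the bookkeeping of empty factors in the applications of~(\ref{crossing}), which is exactly why the theorem allows empty $x_i,z_i$; no appeal to~(\ref{rook}) seems needed at this stage beyond what is already packaged in the earlier lemmas. The single-letter word $\mathbf w=x_1$ is also handled, by the final step applied to $x_1\mathbf a$ and $\mathbf bx_1$. Combining the results for all words of $\mathbf q$ via~(\ref{adding inequalities}) gives $\mathbf q\le_{\Sigma}\mathbf p$, and with Lemmas~\ref{sigma} and~\ref{equivalence} this proves both main theorems.
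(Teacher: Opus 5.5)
Your proof is correct and follows essentially the same route as the paper. The paper also builds each word prefix by prefix: Lemma~\ref{first letters} for the first letter, Lemma~\ref{inner letters} plus (\ref{crossing}) at the shared letter for each step, then Corollary~\ref{last letters} plus (\ref{crossing}) to cut off the tail, and finally sums over the words via (\ref{adding inequalities}). The only difference is that you interchange the roles of $\mathbf p$ and $\mathbf q$, which is immaterial because the conditions are symmetric.
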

\begin{proof}
Let $\mathbf p\approx\mathbf q$ be an identity satisfying the conditions. Take a word $\mathbf w$ in $\mathbf p$. Take a prefix $\mathbf w_1$ of $\mathbf w$, so that $\mathbf w=\mathbf w_1\mathbf w_2$. We will prove that there exists a word $\mathbf w_1\mathbf w'\le_{\Sigma}\mathbf q$. We use induction on the length of $\mathbf w_1$.

\textbf{Induction base:} $\mathbf w_1=x$. The word $\mathbf w=x\mathbf w_2$ is in $\mathbf p$, so $(x,1)\in\init(\mathbf p)=\init(\mathbf q)$. Now the statement directly follows from Lemma~\ref{first letters}.

\textbf{Induction step:} $\mathbf w_1=\mathbf w'_1x$ and $\mathbf q\ge_{\Sigma}\mathbf w'_1\mathbf w'$ for some $\mathbf w'$. Let $y$ be the last letter of $\mathbf w'_1$, so that $\mathbf w'_1=\mathbf w''_1y$ and $\mathbf q\ge_{\Sigma}\mathbf w''_1y\mathbf w'$. Since $\mathbf w=\mathbf w_1\mathbf w_2=\mathbf w''_1yx\mathbf w_2$ is in $\mathbf p$, we have $((y,2),(x,1))\in\rho(\mathbf p)=\rho(\mathbf q)$. By Lemma~\ref{inner letters}, there exists a word $\mathbf u_1yx\mathbf u_2\le_{\Sigma}\mathbf q$. We have
\begin{align*}
\mathbf q&\ge_{\Sigma}\mathbf w''_1y\mathbf w'+\mathbf u_1yx\mathbf u_2&&\text{by~(\ref{adding inequalities})}\\
&\ge_{\Sigma}\mathbf w''_1yx\mathbf u_2.&&\text{by~(\ref{crossing})}
\end{align*}

We have finished the proof by induction. Applying the statement for the case $\mathbf w_1=\mathbf w$, we have $\mathbf q\ge_{\Sigma}\mathbf w\mathbf w'$. Let $x$ be the last letter in $\mathbf w$, so that $\mathbf w=\mathbf w''x$. Since $\mathbf w$ is a word in $\mathbf p$, we have $(x,2)\in\term(\mathbf p)=\term(\mathbf q)$. By Corollary~\ref{last letters}, there exists a word $\mathbf ux\le_{\Sigma}\mathbf q$. We have
\begin{align*}
\mathbf q&\ge_{\Sigma}\mathbf w''x\mathbf w'+\mathbf ux&&\text{by~(\ref{adding inequalities})}\\
&\ge_{\Sigma}\mathbf w''x.&&\text{by~(\ref{crossing})}
\end{align*}
We see that $\mathbf q\ge_{\Sigma}\mathbf w$. This holds for every word $\mathbf w$ in $\mathbf p$, whence $\mathbf p\le_{\Sigma}\mathbf q$ by~(\ref{adding inequalities}). Similarly, $\mathbf q\le_{\Sigma}\mathbf p$, whence $\mathbf p\approx_{\Sigma}\mathbf q$.
\end{proof}

\emph{Proof of Theorems}~\ref{main1} \emph{and}~\ref{main2}. Directly follows from Lemmas~\ref{sigma}, \ref{equivalence}, and \ref{final}.


\begin{thebibliography}{99}

\bibitem{Garvackii-71} Garvac$'$ki\u\i\ V.S.: $\cap$-semigroups of transformations. Theory of Semigroups and its Applications 2, Izdat. Saratov. Uni., Saratov, 2--13 (1971) (in Russian)

\bibitem{Gusev-23} Gusev S.V., Volkov M.V.: Semiring identities of finite inverse semigroups. Semigroup Forum 106, 403--420 (2023)

\bibitem{Jackson-22} Jackson M., Ren M., Zhao X.: Nonfinitely based ai-semirings with finitely based semigroup reducts. J. Algebra 611, 211--245 (2022)

\bibitem{Lawson-99} Lawson M.V.: Inverse Semigroups. The Theory of Partial Symmetries. World Scientific, Singapore (1999)

\bibitem{Perkins-68} Perkins P.: Bases for equational theories of semigroups. J. Algebra 11, 298--314 (1968)

\bibitem{Petrich-84} Petrich M.: Inverse Semigroups. John Wiley \& Sons, New York (1984)

\bibitem{Reilly-08} Reilly N.R.: The interval $[\mathbf B_2,\mathbf{NB}_2]$ in the lattice of Rees-Sushkevich varieties. Algebra Universalis 59, no. 3--4, 345--363 (2008)

\bibitem{Schein-73} Schein B.M.: Completions, thanslational hulls and ideal extensions of inverse semigroups. Czechoslovak Math. J. 23(4), 575--610 (1973)

\bibitem{Shaprynskii-24} Shaprynski\v\i\ V.Yu.: Semiring identities in the semigroup $B_0$. Semigroup Forum 109, 693--705 (2024) 

\bibitem{Volkov-19} Volkov M.V.: Identities in Brandt semigroups, revisited. Ural Math. J. 5(2), 80--93 (2019)

\bibitem{Volkov-21} Volkov M.V.: Semiring identities of the Brandt monoid. Algebra Universalis 82, 42 (2021)

\bibitem{Zhao-20} Zhao X.Z., Ren M.M., Crvenkovi\'c S., Shao Y., \DJ api\'c P.: The variety generated by an ai-semiring of order three. Ural Math. J. 6(2), 117--132 (2020)
\end{thebibliography}
\end{document}